\theoremstyle{plain}
\newtheorem{theorem}{Theorem}[section]
\newtheorem{lemma}[theorem]{Lemma}
\theoremstyle{definition}
\newtheorem{definition}[theorem]{Definition}
\theoremstyle{remark}
\newtheorem*{remark}{Remark}
\begin{document}

\title{Agmon--Kolmogorov inequalities on $\ell^2(\Bbb Z^d)$}
\author{Arman Sahovic}
\date{\today}
\maketitle

\begin{abstract}

Landau--Kolmogorov inequalities have been extensively studied on both continuous and discrete domains for an entire century. However, the research is limited to the study of functions and sequences on $\Bbb R$ and $\Bbb Z$, with no equivalent inequalities in higher-dimensional spaces. The aim of this paper is to obtain a new class of discrete Landau--Kolmogorov type inequalities of arbitrary dimension:
$$
\|\varphi\|_{\ell^\infty(\Bbb Z^d)} 
\leq
\mu_{p,d}\|\nabla_D\varphi\|^{p/2^d}_{\ell^2(\Bbb Z^d)}\,  \|\varphi\|^{1-p/2^d}_{\ell^2(\Bbb Z^d)},
$$ 
where the constant $\mu_{p,d}$ is explicitly specified. In fact, this also generalises the discrete Agmon inequality to higher dimension, which in the corresponding continuous case is not possible. 

\end{abstract}

\section{Introduction}

\noindent In 1912, G. H. Hardy, J. E. Littlewood and G. P\'{o}lya 
(see \cite{Hardy}) proved the following inequalities for a function $f\in L^2(\Bbb R)$: 
\begin{equation}\label{HLP1}
\|f'\|_{L^2(-\infty,\infty)}\leq \|f\|^{1/2}_{L^2(-\infty,\infty)}\|f''\|^{1/2}_{L^2(-\infty,\infty)},
\end{equation}
\begin{equation}\label{HLP2}
\|f'\|_{L^2(0,\infty)}\leq \sqrt{2}\|f\|^{1/2}_{L^2(0,\infty)}\|f''\|^{1/2}_{L^2(0,\infty)},
\end{equation}
with the constants $1$ and $\sqrt{2}$ being sharp. These results sparked interest in inequalities involving functions, their derivatives and integrals for a century to come. Specifically, in 1913, E. Landau (see \cite{Lan1913}) proved the following inequality:
For $\Omega\subseteq\Bbb R$, and $f\in L^\infty(\Omega)$:
$$
\| f'\|_{L^\infty(\Omega)} \leq \sqrt{2}\| f''\|_{L^\infty(\Omega)}^{1/2} \| f\|_{L^\infty(\Omega)}^{1/2},
$$
with the constant $\sqrt{2}$ being sharp. This result in turn was motivation for A. Kolmogorov (see \cite{Kol39}), where in 1939 he found sharp constants for the more general case, using a simple, but very effective inductive argument to extend the case to higher order derivatives:
$$
\| f^{(k)}\|_{L^\infty(\Omega)} \leq C(k,n)\| f^{(n)}\|_{L^\infty(\Omega)}^{k/n} \| f\|_{L^\infty(\Omega)}^{1-k/n},
$$

where, for $k, n\in\Bbb N $ with $1 \leq k < n$, he determined the best constants $C(k,n)\in\Bbb R$ for $\Omega=\Bbb R$. Since then, there has been a great deal of work on what are nowadays known as the Landau--Kolmogorov inequalities, which are in their most general form: 
$$
\| f^{(k)}\|_{L^p} \leq K(k,n,p,q,r)\,\,\| f^{(n)}\|_{L^q}^{\alpha} \| f\|_{L^r}^{\beta},
$$

with the minimal constant $K = K(k,n,p,q,r)$. The real numbers $p, q, r \geq 1$;  $k, n\in\Bbb N $ with $(0 \leq k < n)$ and $\alpha, \beta\in\Bbb R$ take on values for which the constant $K$ is finite (see \cite{Gab1967}).

However, literature on discrete equivalents of those inequalities remained very limited for a long time. In 1979, E. T. Copson (see \cite{Cop79})  was one of the first to find equivalent results for sequences, series and difference operators.
Indeed, he found the discrete equivalent to \eqref{HLP1} and \eqref{HLP2}. For a square summable sequence, $\{a(n)\}_{n\in\Bbb Z}\in\ell^2(\Bbb Z)$ and a difference operator $(Da)(n):=a(n+1)-a(n)$, we have:
\begin{equation}\label{Cop1}
\|Da\|_{\ell^2(-\infty,\infty)}\leq \|a\|^{1/2}_{\ell^2(-\infty,\infty)}\|D^2a\|^{1/2}_{\ell^2(-\infty,\infty)},
\end{equation}
\begin{equation}\label{Cop2}
\|Da\|_{\ell^2(0,\infty)}\leq \sqrt{2}\|a\|^{1/2}_{\ell^2(0,\infty)}\|D^2a\|^{1/2}_{\ell^2(0,\infty)},
\end{equation}

with the constants $1$ and $\sqrt{2}$ yet again being sharp. Z. Ditzian (see \cite{Dit2}) then extended those results to establish best constants for a variety of Banach spaces, adding equivalent results for continuous shift operators $f(x+h)-f(x);\,\, x\in \Bbb R, f\in\L^2(\Bbb R)$. 

Comparing inequalities such as \eqref{HLP1} and \eqref{HLP2}, with \eqref{Cop1}  and \eqref{Cop2} respectively, it was suspected that sharp constants were identical for equivalent discrete and continuous Landau--Kolmogorov inequalities for $1\leq p=q=r\leq\infty$. Indeed, in the cases $p=1, 2, \infty$, this was true for the whole and semi-axis. However, the general case has since been shown to be false, as for example demonstrated in \cite{KKZ88} by M. K. Kwong and A. Zettl, where they prove that for many values of $p$, the discrete constants are strictly greater than the continuous ones. 

Another important special case of the Landau--Kolmogorov inequalities is the Agmon inequality, proven by S. Agmon (see \cite{Agmon10}). Viewed as an interpolation inequality between $L^{\infty}(\Bbb R)$ and $L^{2}(\Bbb R)$, he states the following:
$$
\|f\|_{L^\infty(\Bbb R)}\leq \|f\|^{1/2}_{L^2(\Bbb R)} \|f'\|^{1/2}_{L^2(\Bbb R)}.
$$
Thus, throughout this paper we shall call, for a domain $\Omega$, a function $f\in L^2(\Omega)$, a sequence $\varphi\in\ell^2(\Omega)$, $\alpha,\,\beta$ being $\Bbb Q$-valued functions of the integers $k,\,n$ with $k\le n$ and constants $C(\Omega,k,n),$ $ \,D(\Omega,k,n)\in \Bbb R$:
\begin{equation}\label{Agmon-Kolmogorov-Continuous}
\|f^{(k)}\|_{L^\infty(\Omega)}\leq C(\Omega,k,n)\,\,\|f\|^{\alpha(k,n)}_{L^2(\Omega)} \,\|f^{(n)}\|^{\beta(k,n)}_{L^2(\Omega)},
\end{equation}
\begin{equation}\label{Agmon-Kolmogorov-Discrete}
\|D^k \varphi\|_{\ell^\infty(\Omega)}\leq D(\Omega,k,n)\,\|\varphi\|^{\alpha(k,n)}_{\ell^2(\Omega)} \,\|D^n \varphi\|^{\beta(k,n)}_{\ell^2(\Omega)},
\end{equation}
Agmon--Kolmogorov inequalities, where \eqref{Agmon-Kolmogorov-Discrete}, for $\Omega:=\Bbb Z^d$ will be the central concern of this paper. Specifically we only require the case wherew $k=0$ and $n=1$, whereas the other inequalities, i.e. those concerned with higher order, have been discussed in \cite{Sah13}.

\section{Agmon--Kolmogorov Inequalities over $\Bbb Z^d$}

We introduce our notation for the $d$-dimensional inner product space of square summable sequences.
For a vector of integers $\zeta:=(\zeta_1,\ldots,\zeta_d)\in\Bbb Z^d$, we say $\{\varphi(\zeta)\}_{\zeta\in\Bbb Z^d}\in\ell^2(\Bbb Z^d)$, if and only if the following norm is finite: 
$$
\|\varphi\|_{\ell^2({\Bbb Z^d})}:=\Big(\sum_{\zeta\in\Bbb Z^d} |\varphi(\zeta)|^p\Big)^{1/2}.
$$
Then, for $\varphi,\,\phi\in\ell^2(\mathbb{Z}^d)$, we let $<.,.>_d$ be the inner product on $\ell^2(\mathbb{Z}^d)$:
$$
\langle\varphi, \phi\rangle_d:=\sum_{\zeta\in\Bbb Z^d}
 \varphi(\zeta)\overline{\phi(\zeta)}.
$$

\noindent We then let $D_1,\ldots,D_d$ be the partial difference operators defined by:
$$
(D_i \varphi) (\zeta):= \varphi(\zeta_1,\ldots,\zeta_i+1,\ldots ,\zeta_d)-\varphi(\zeta_1,\ldots,\zeta_d),
$$
\noindent The discrete gradiant $\nabla_D$ shall thus take the following form:
$$
\nabla_D \varphi(\zeta_1,\zeta_2, \ldots, \zeta_d) = \big(D_1\varphi(\zeta), D_2\varphi(\zeta),\ldots, D_d \varphi(\zeta)\big).
$$

\noindent Thus, combining this definition with that of our norm above, we obtain:
$$
\|\nabla_D \varphi\|^2_{\ell^2({\Bbb Z^d})}=\|D_1 \varphi\|^2_{\ell^2({\Bbb Z^d})}+\ldots +\|D_d \varphi\|^2_{\ell^2({\Bbb Z^d})}.
$$
Further, we require the following notation:

\begin{definition}
For a sequence $\varphi(\zeta)\in\ell^2(\Bbb Z^d)$ with $\zeta:=(\zeta_1,...,\zeta_d)\in\Bbb Z^d$, for $0\leq k\leq d$ we define:
$$
[\varphi]_k:=\left(\sum_{\zeta_1\in \Bbb Z}...\sum_{\zeta_k\in \Bbb Z}|\varphi(\zeta)|^2\right)^{1/2}.
$$

\end{definition}

\begin{remark}
We identify that $[\varphi]_0:=|\varphi(\zeta)|$ and if we apply this operator for $k=d$, i.e. sum across all coordinates, we obtain the $\ell^2(\Bbb Z^d)$-norm:
$$
\left[\varphi\right]_d= \|\varphi\|_{\ell^2{(\Bbb Z^d)}}.
$$
\end{remark}

\noindent We are interested in a higher-dimensional version of the discrete Agmon inequality (see \cite{Sah10}), which estimates the sup-norm of a sequence $\phi\in\ell^2(\Bbb Z)$ as follows:
$$
\|\phi\|^2_{\ell^\infty(\Bbb Z)}\leq\|\phi\|_{\ell^2(\Bbb Z)}\|D\phi\|_{\ell^2(\Bbb Z)}.
$$  
Thus we commence by 'lifting' this estimate to encompass more variables:
\begin{lemma}[Agmon--Cauchy Inequality]\label{Agmon-CauchyArbDim} For the operator $D_{k+1}$, acting on a sequence $\varphi(\zeta)\in\ell^2(\Bbb Z^d)$, we have:
\begin{equation*}
\sup_{\zeta_{k+1}\in\Bbb Z}\left[\varphi\right]_k \le 
 \left[D_{k+1}\varphi\right]_{k+1}^{1/2} \,  \left[\varphi\right]_{k+1}^{1/2}.
\end{equation*}

\end{lemma}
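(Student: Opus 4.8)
The plan is to reduce this $d$-dimensional estimate to the one-dimensional discrete Agmon inequality $\|\phi\|^2_{\ell^\infty(\Bbb Z)}\leq\|\phi\|_{\ell^2(\Bbb Z)}\|D\phi\|_{\ell^2(\Bbb Z)}$ by treating every coordinate except $\zeta_{k+1}$ as a passive parameter. Observe first that once we sum over $\zeta_1,\ldots,\zeta_k$, the quantity $\left[\varphi\right]_k$ depends only on $\zeta_{k+1},\ldots,\zeta_d$, and likewise both factors on the right-hand side depend only on $\zeta_{k+2},\ldots,\zeta_d$; hence the asserted inequality is really a pointwise statement in the outer variables $\zeta_{k+2},\ldots,\zeta_d$, which I would fix throughout. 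With those frozen, I would set
$$
g(\zeta_{k+1}):=\left[\varphi\right]_k=\Big(\sum_{\zeta_1\in\Bbb Z}\cdots\sum_{\zeta_k\in\Bbb Z}|\varphi(\zeta)|^2\Big)^{1/2},
$$
and regard $g$ as a genuine one-dimensional sequence in the single variable $\zeta_{k+1}$. Since $\sum_{\zeta_{k+1}\in\Bbb Z}g(\zeta_{k+1})^2=\left[\varphi\right]_{k+1}^2<\infty$, we have $g\in\ell^2(\Bbb Z)$, so the one-dimensional discrete Agmon inequality applies to $g$ and yields
$$
\Big(\sup_{\zeta_{k+1}\in\Bbb Z}\left[\varphi\right]_k\Big)^2=\|g\|^2_{\ell^\infty(\Bbb Z)}\le\|g\|_{\ell^2(\Bbb Z)}\,\|Dg\|_{\ell^2(\Bbb Z)}=\left[\varphi\right]_{k+1}\,\|Dg\|_{\ell^2(\Bbb Z)}.
$$

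The crux is then to bound $\|Dg\|_{\ell^2(\Bbb Z)}$ by $\left[D_{k+1}\varphi\right]_{k+1}$, which amounts to passing the difference operator $D_{k+1}$ through the square root defining $\left[\cdot\right]_k$. For fixed $\zeta_{k+1}$ I would write $v_{\zeta_{k+1}}$ for the slice $\zeta\mapsto\varphi(\zeta_1,\ldots,\zeta_k,\zeta_{k+1},\ldots)$ viewed as a vector indexed by $\zeta_1,\ldots,\zeta_k$, so that $g(\zeta_{k+1})=\|v_{\zeta_{k+1}}\|$, the $\ell^2$-norm over the first $k$ coordinates. The reverse triangle inequality then gives
$$
\big|(Dg)(\zeta_{k+1})\big|=\big|\,\|v_{\zeta_{k+1}+1}\|-\|v_{\zeta_{k+1}}\|\,\big|\le\|v_{\zeta_{k+1}+1}-v_{\zeta_{k+1}}\|=\left[D_{k+1}\varphi\right]_k,
$$
where the last equality holds because the difference of slices $v_{\zeta_{k+1}+1}-v_{\zeta_{k+1}}$ is precisely the slice of $D_{k+1}\varphi$. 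Squaring and summing over $\zeta_{k+1}$ produces $\|Dg\|^2_{\ell^2(\Bbb Z)}\le\left[D_{k+1}\varphi\right]_{k+1}^2$, and substituting this into the Agmon estimate above gives $\big(\sup_{\zeta_{k+1}}\left[\varphi\right]_k\big)^2\le\left[\varphi\right]_{k+1}\,\left[D_{k+1}\varphi\right]_{k+1}$, which is the square of the claim.

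I expect the reverse-triangle-inequality step to be the main obstacle, since it is the only point where $D_{k+1}$ genuinely interacts with the nonlinear map $\varphi\mapsto\left[\varphi\right]_k$; everything else is bookkeeping. The inequality $|\,\|a\|-\|b\|\,|\le\|a-b\|$ is exactly the tool that lets one trade the awkward difference of norms for the tractable norm of a difference, at the cost of passing from equality to inequality — which is harmless here. The remaining care is merely to record that the outer coordinates play no role and that the finiteness $g\in\ell^2(\Bbb Z)$ needed to invoke the base case is automatic from $\varphi\in\ell^2(\Bbb Z^d)$.
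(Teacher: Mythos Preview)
Your proof is correct but follows a genuinely different route from the paper's. The paper first applies the one-dimensional Agmon inequality \emph{pointwise} in all the inner coordinates $\zeta_1,\ldots,\zeta_k$ (as well as the outer ones), obtaining $|\varphi(\zeta)|^2\le\big(\sum_l|D_{k+1}\varphi|^2\big)^{1/2}\big(\sum_l|\varphi|^2\big)^{1/2}$, then sums over $\zeta_1,\ldots,\zeta_k$ and uses Cauchy--Schwarz once in each of those $k$ variables to pull the sums inside the square roots; hence the lemma's name ``Agmon--Cauchy''. You instead aggregate first, forming $g(\zeta_{k+1})=[\varphi]_k$, apply the one-dimensional Agmon inequality once to $g$, and then invoke the reverse triangle inequality $\bigl|\,\|v_{\zeta_{k+1}+1}\|-\|v_{\zeta_{k+1}}\|\,\bigr|\le\|v_{\zeta_{k+1}+1}-v_{\zeta_{k+1}}\|$ to pass $D_{k+1}$ through the norm. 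Your argument is shorter and arguably cleaner, replacing $k$ applications of Cauchy--Schwarz by a single use of the reverse triangle inequality; the paper's approach, on the other hand, keeps everything at the level of pointwise estimates on $\varphi$ itself and never needs to differentiate a nonlinear function of $\varphi$, which some readers may find more transparent.
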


\begin{proof}

\noindent Using the discrete Agmon inequality on the $(k+1)^{th}$ coordinate, we find:
\begin{equation*}
 | \varphi(\zeta_1,\ldots,\zeta_d)|^2
\le 
\Big(\sum_{l\in \Bbb Z} | D_{k+1}\varphi(\zeta_1,\ldots,\zeta_k,l,\zeta_{k+2},\ldots,\zeta_{d})|^2\Big)^{1/2} 
\Big( \sum_{l\in \Bbb Z} | \varphi(\zeta_1,\ldots,\zeta_k,l,\zeta_{k+2},\ldots,\zeta_d)|^2\Big)^{1/2}.
\end{equation*}

\noindent Now we sum with respect to the other coordinates:
$$
 \sum_{\zeta_1\in \Bbb Z}...\sum_{\zeta_k\in \Bbb Z}| \varphi(\zeta_1,\ldots,\zeta_d)|^2  \le \qquad\qquad\qquad\qquad\qquad
\qquad\qquad\qquad\qquad\qquad\qquad\qquad\qquad\qquad\qquad
 $$
 $$
\sum_{\zeta_1\in \Bbb Z}...\sum_{\zeta_k\in \Bbb Z}\left[\Big(\sum_{l\in \Bbb Z} | D_{k+1}\varphi(\zeta_1,\ldots,\zeta_k,l,\zeta_{k+2},\ldots,\zeta_{d})|^2\Big)^{1/2}  \,
\Big( \sum_{l\in \Bbb Z} | \varphi(\zeta_1,\ldots,\zeta_k,l,\zeta_{k+2},\ldots,\zeta_d)|^2\Big)^{1/2}\right],
$$ 

\noindent and use the Cauchy--Schwartz inequality on the $k^{th}$ coordinate:
\begin{eqnarray*}
 \sum_{\zeta_1\in \Bbb Z}...\sum_{\zeta_k\in \Bbb Z}| \varphi(\zeta_1,\ldots,\zeta_d)|^2 
 &\le &
\sum_{\zeta_1\in \Bbb Z}...\sum_{\zeta_{k-1}\in \Bbb Z}\Big[\Big(\sum_{\zeta_k\in \Bbb Z}\sum_{l\in \Bbb Z} | D_{k+1}\varphi(\zeta_1,\ldots,\zeta_k,l,\zeta_{k+2},\ldots,\zeta_{d},)|^2\Big)^{1/2}  \cdot\\
&&
\,\Big( \sum_{\zeta_k\in \Bbb Z}\sum_{l\in \Bbb Z} | \varphi(\zeta_1,\ldots,\zeta_k,l,\zeta_{k+2},\ldots,\zeta_d)|^2\Big)^{1/2}\Big].
\end{eqnarray*}

\noindent We repeat this process to finally obtain:
\begin{eqnarray*}
\sum_{\zeta_1\in \Bbb Z}...\sum_{\zeta_k\in \Bbb Z}| \varphi(\zeta_1,\ldots,\zeta_d)|^2 
 &\le &
\Big(\sum_{\zeta_1\in \Bbb Z}...\sum_{\zeta_k\in \Bbb Z}\sum_{l\in \Bbb Z} | D_{k+1} \varphi(\zeta_1,\ldots,\zeta_k,l,\zeta_{k+2},\ldots,\zeta_{d})|^2\Big)^{1/2}  \cdot \\
&&
\,\Big( \sum_{\zeta_1\in \Bbb Z}...\sum_{\zeta_k\in \Bbb Z}\sum_{l\in \Bbb Z} | \varphi(\zeta_1,\ldots,\zeta_k,l,\zeta_{k+2},\ldots,\zeta_d)|^2\Big)^{1/2}.
\end{eqnarray*}

\end{proof}
\noindent We estimate the $\ell^2{(\Bbb Z^d)}$-norm of a partial difference operator with the $\ell^2{(\Bbb Z^d)}$-norm of the sequence itself:
\newpage
\begin{lemma}\label{DownDifferenceN}
For a sequence $\varphi\in\ell^2(\Bbb Z^d)$ and for $i\in\{1,\ldots,d\}$, we have:
$$
 \,\|D_i \varphi\|_{\ell^2{(\Bbb Z^d)}}
 \leq
2 \|\varphi\|_{\ell^2{(\Bbb Z^d)}}.
$$
\end{lemma}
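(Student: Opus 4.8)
The plan is to split the difference operator into a shift part and an identity part and then apply the triangle inequality. Introduce the unit forward shift in the $i$th coordinate, $(S_i\varphi)(\zeta) := \varphi(\zeta_1,\ldots,\zeta_i+1,\ldots,\zeta_d)$, so that by definition $D_i\varphi = S_i\varphi - \varphi$. Since $\|\cdot\|_{\ell^2(\Bbb Z^d)}$ is a genuine norm, the triangle inequality immediately yields
$$
\|D_i\varphi\|_{\ell^2(\Bbb Z^d)} = \|S_i\varphi - \varphi\|_{\ell^2(\Bbb Z^d)} \leq \|S_i\varphi\|_{\ell^2(\Bbb Z^d)} + \|\varphi\|_{\ell^2(\Bbb Z^d)}.
$$
The whole lemma then reduces to controlling the shifted term $\|S_i\varphi\|_{\ell^2(\Bbb Z^d)}$.

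The key step is to observe that $S_i$ is an isometry on $\ell^2(\Bbb Z^d)$. Indeed, writing the norm out as a sum over the lattice and performing the change of summation index $\zeta_i \mapsto \zeta_i - 1$ in the $i$th coordinate — which is a bijection of $\Bbb Z$ onto itself — leaves the full sum over $\Bbb Z^d$ unchanged, so that
$$
\|S_i\varphi\|^2_{\ell^2(\Bbb Z^d)} = \sum_{\zeta\in\Bbb Z^d} |\varphi(\zeta_1,\ldots,\zeta_i+1,\ldots,\zeta_d)|^2 = \sum_{\zeta\in\Bbb Z^d} |\varphi(\zeta)|^2 = \|\varphi\|^2_{\ell^2(\Bbb Z^d)}.
$$
Substituting $\|S_i\varphi\|_{\ell^2(\Bbb Z^d)} = \|\varphi\|_{\ell^2(\Bbb Z^d)}$ into the displayed triangle-inequality bound gives $\|D_i\varphi\|_{\ell^2(\Bbb Z^d)} \leq 2\|\varphi\|_{\ell^2(\Bbb Z^d)}$, as claimed.

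I do not anticipate a genuine obstacle here, as the estimate is a direct consequence of the triangle inequality together with translation invariance. The only point that warrants care is precisely this invariance: the isometry property of $S_i$ relies essentially on summing over the \emph{entire} lattice $\Bbb Z^d$, so that the reindexing is a bijection with no boundary contribution. On a restricted domain such as a half-lattice the argument would break down, and the resulting constant could differ — which is consistent with the domain-dependence of the sharp constants discussed in the introduction. Note also that we only claim the bound with constant $2$ and make no assertion of its sharpness, so the crude triangle-inequality approach suffices for our purposes.
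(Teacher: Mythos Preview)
Your proof is correct and follows essentially the same idea as the paper's: both decompose $D_i\varphi$ as the difference of the shifted sequence and the original, invoke translation invariance on $\Bbb Z^d$ to see that the shift preserves the $\ell^2$-norm, and then combine the two pieces. The only cosmetic difference is that the paper applies the pointwise bound $|a-b|^2 \le 2(|a|^2+|b|^2)$ termwise before summing, whereas you use the triangle inequality directly at the level of norms.
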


\begin{proof} We show the argument for $D_1$ and note that due to symmetry the other cases follow immediately.
\begin{eqnarray*}
 \|D_1 \varphi\|^2_{\ell^2{(\Bbb Z^d)}}
 &=&
 \sum_{\zeta\in\Bbb Z^d} | \varphi(\zeta_1+1,\ldots,\zeta_d) - (\zeta_1,\ldots,\zeta_d)|^2\\
 &\le & 
  2\,  \Big( \sum_{\zeta\in\Bbb Z^d} |\varphi(\zeta_1+1,\ldots,\zeta_d)|^2 +  \sum_{\zeta\in\Bbb Z^d} |\varphi(\zeta_1,\ldots,\zeta_d)|^2\Big)\\ 
&=& 
4\, \sum_{\zeta\in\Bbb Z^d}| \varphi(\zeta_1,\ldots,\zeta_d)|^2\\
&=&
4 \|\varphi\|^2_{\ell^2{(\Bbb Z^d)}}.
\end{eqnarray*}
\end{proof}
\noindent This implies that we can obtain an estimate for any mixed difference operator as follows:
$$
\|D_1\ldots D_k \varphi\|_{\ell^2{(\Bbb Z^d)}}\le 2\|D_1\ldots D_{l-1}D_{l+1}\ldots D_k \varphi\|_{\ell^2{(\Bbb Z^d)}}.
$$
Therefore, by eliminating $l$ difference operators, our inequality will contain the constant $2^l$.

\noindent We arrive at our main result, the Agmon--Kolmogorov inequalities on $\ell^2(\Bbb Z^d)$. 

\begin{theorem} \label{AgmonArbDim}

For a sequence $\varphi\in\ell^2(\Bbb Z^d)$, and $p\in\{1,\ldots,2^{d-1}\}$:
$$
\|\varphi\|_{\ell^\infty(\Bbb Z^d)} 
\leq
\mu_{p,d}\|\nabla_D\varphi\|^{p/2^d}_{\ell^2(\Bbb Z^d)}\,  \|\varphi\|^{1-p/2^d}_{\ell^2(\Bbb Z^d)},
$$

\noindent where
$$
\mu_{p,d}:=\left(\frac{\kappa_{p,d}}{{d^{p/2}}}\right)^{1/2^d},
$$
and $\kappa_{p,d}$ is a constant to be determined in the following section.

\end{theorem}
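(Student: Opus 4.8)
The plan is to reduce the theorem to the iterated action of the Agmon--Cauchy inequality (Lemma \ref{Agmon-CauchyArbDim}) across all $d$ coordinates, and then to dispose of the resulting mixed difference operators using Lemma \ref{DownDifferenceN} together with the arithmetic--geometric mean inequality.

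First I would prove, by induction on $m$ with $0\le m\le d$, the product estimate
$$
\sup_{\zeta_1,\ldots,\zeta_m}|\varphi(\zeta)|\le\prod_{S\subseteq\{1,\ldots,m\}}\big[D_S\varphi\big]_m^{1/2^m},
$$
where for $S=\{i_1,\ldots,i_j\}$ I write $D_S:=D_{i_1}\cdots D_{i_j}$ and $D_\emptyset\varphi:=\varphi$. The base case $m=0$ is the identity $[\varphi]_0=|\varphi(\zeta)|$. For the inductive step I take $\sup_{\zeta_{m+1}}$ of both sides, use that the supremum of a product is at most the product of the suprema, and then apply Lemma \ref{Agmon-CauchyArbDim} with $k=m$ to each factor, giving $\sup_{\zeta_{m+1}}[D_S\varphi]_m\le[D_{m+1}D_S\varphi]_{m+1}^{1/2}[D_S\varphi]_{m+1}^{1/2}$. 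Collecting the two families $\{D_{m+1}D_S\}$ and $\{D_S\}$ over $S\subseteq\{1,\ldots,m\}$ reproduces exactly the index set of all subsets of $\{1,\ldots,m+1\}$, each carrying the exponent $1/2^{m+1}$, which closes the induction. Setting $m=d$ and recalling $[\,\cdot\,]_d=\|\cdot\|_{\ell^2(\Bbb Z^d)}$ yields the central structural bound
$$
\|\varphi\|_{\ell^\infty(\Bbb Z^d)}^{2^d}\le\prod_{S\subseteq\{1,\ldots,d\}}\|D_S\varphi\|_{\ell^2(\Bbb Z^d)}.
$$

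Next I would convert this product into the stated interpolation form. The empty set contributes one factor $\|\varphi\|_{\ell^2}$, the $d$ singletons contribute $\prod_{i=1}^d\|D_i\varphi\|_{\ell^2}$, and each remaining factor with $|S|\ge2$ is reduced by repeated use of Lemma \ref{DownDifferenceN}, eliminating $|S|$ operators at the cost $2^{|S|}$, so that $\|D_S\varphi\|_{\ell^2}\le 2^{|S|}\|\varphi\|_{\ell^2}$. The $d^{p/2}$ in the denominator is then produced by the arithmetic--geometric mean inequality applied to the single-direction terms, $\prod_{i=1}^d\|D_i\varphi\|_{\ell^2}\le d^{-d/2}\|\nabla_D\varphi\|_{\ell^2}^d$, combined with the elementary identity $\|\nabla_D\varphi\|^d_{\ell^2}=\|\nabla_D\varphi\|_{\ell^2}^{p}\,d^{(d-p)/2}\big(d^{-1}\textstyle\sum_i\|D_i\varphi\|_{\ell^2}^2\big)^{(d-p)/2}$ and the pointwise bound $\|D_i\varphi\|_{\ell^2}\le2\|\varphi\|_{\ell^2}$ of Lemma \ref{DownDifferenceN}; this extracts precisely the factor $d^{-p/2}$ while turning the surplus gradient into powers of $\|\varphi\|_{\ell^2}$. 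For $p$ exceeding $d$ one instead retains the appropriate number of mixed factors as gradient contributions before averaging. All accumulated powers of two and combinatorial weights are absorbed into the single constant $\kappa_{p,d}$, whose optimal value over the admissible choices of retained factors is exactly what the next section computes. Taking the $2^d$-th root then gives $\|\varphi\|_{\ell^\infty}\le(\kappa_{p,d}/d^{p/2})^{1/2^d}\|\nabla_D\varphi\|^{p/2^d}_{\ell^2}\|\varphi\|^{1-p/2^d}_{\ell^2}$, as claimed.

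The main obstacle is the inductive step: one must verify that a single supremum over $\zeta_{m+1}$, distributed across a product of $2^m$ factors and followed by the one-coordinate Agmon--Cauchy estimate, regenerates precisely the full family of $2^{m+1}$ subsets with the halved exponent, with no factor counted twice and none omitted. Once this bookkeeping is secured, the remaining reduction is a routine combination of Lemma \ref{DownDifferenceN} and the arithmetic--geometric mean inequality, and only the determination of the sharp constant $\kappa_{p,d}$ (together with the resulting constraint $p\le 2^{d-1}$ on the exponent) is left to the following section.
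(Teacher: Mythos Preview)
Your inductive derivation of the product bound
\[
\|\varphi\|_{\ell^\infty(\Bbb Z^d)}^{2^d}\le\prod_{S\subseteq\{1,\ldots,d\}}\|D_S\varphi\|_{\ell^2(\Bbb Z^d)}
\]
is correct and is exactly what the paper obtains with its ``$\ldots$'' notation; you have just made the bookkeeping explicit. The divergence from the paper is in the second step, and there your argument has a genuine gap.

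The paper does \emph{not} use the AM--GM inequality. Instead it fixes one direction, reduces $p$ of the $2^{d-1}$ factors containing $D_1$ down to $\|D_1\varphi\|_{\ell^2}$ and all remaining factors to $\|\varphi\|_{\ell^2}$, obtaining $\|\varphi\|_{\ell^\infty}^{2^d}\le\kappa_{p,d}\|D_1\varphi\|_{\ell^2}^{\,p}\,\|\varphi\|_{\ell^2}^{\,2^d-p}$. It then raises this to the power $2/p$ and \emph{sums the resulting inequality over the $d$ coordinate directions}; this turns $\|D_1\varphi\|_{\ell^2}^{2}$ into $\|\nabla_D\varphi\|_{\ell^2}^{2}$ and produces the factor $1/d$, hence $d^{-p/2}$ after raising back. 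This symmetrisation is also what forces the range $p\le 2^{d-1}$: only $2^{d-1}$ subsets contain the index~$1$.

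Your AM--GM route is a genuinely different idea, and for $p\le d$ it works and (one can check) yields the same constant $\kappa_{p,d}=2^{d\cdot 2^{d-1}-p}$. The problem is the single sentence handling $p>d$. Retaining more than $d$ single-direction factors forces repeated indices, and a product $\prod_{j=1}^{p}\|D_{i_j}\varphi\|_{\ell^2}$ with repetitions is \emph{not} bounded by $d^{-p/2}\|\nabla_D\varphi\|_{\ell^2}^{p}$ in general: take any $\varphi$ with $D_2\varphi=\cdots=D_d\varphi=0$ and all $i_j=1$. Weighted AM--GM only recovers the factor $d^{-p/2}$ when every direction appears exactly $p/d$ times, which is unavailable for general $p$. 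Since for $d\ge 3$ the range $d<p\le 2^{d-1}$ is nonempty, this case must actually be treated; the clean fix is precisely the paper's symmetrisation trick of reducing to a single fixed direction and then summing over directions.
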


\begin{proof}$\ $\\
\noindent We use Lemma \ref{Agmon-CauchyArbDim} and Lemma \ref{DownDifferenceN} repeatedly:
\begin{eqnarray*}
\|\varphi\|_{\ell^\infty(\Bbb Z^d)}
&\leq &
\left[D_{1} \varphi\right]^{1/2}_{1} \,  \left[\varphi\right]^{1/2}_{1}\\
&\leq &
\left[D_{2}D_1\varphi\right]^{1/4}_{2} \,  \left[D_1\varphi\right]^{1/4}_{2}\left[D_{2}\varphi\right]^{1/4}_{2} \,  \left[\varphi\right]^{1/4}_{2}\\
%
%
&\vdots &\\
&\leq & 
\left[D_d\ldots D_{1}\varphi\right]^{1/2^d}_{d}\,\ldots\,\,\ldots \,  \left[\varphi\right]^{1/2^d}_{d}\\
& = &
\|D_d\ldots D_{1}\varphi\|^{1/2^d}_{\ell^2(\Bbb Z^d)}\,\ldots \,\,\ldots \,  \|\varphi\|^{1/2^d}_{\ell^2(\Bbb Z^d)}\\
\Rightarrow \|\varphi\|_{\ell^\infty(\Bbb Z^d)}^{2^d}& \leq &
\|D_d\ldots D_{1}\varphi\|_{\ell^2(\Bbb Z^d)}\,\ldots\,\,\ldots  \,  \|\varphi\|_{\ell^2(\Bbb Z^d)}.
\end{eqnarray*}

\noindent We have generated an estimate by $2^d$ norms, with exactly $2^{d-1}$ norms  originating from the term $\left[D_{1}\varphi\right]^{1/2}_{1}$. All those will thus involve the operator $D_1$, or more formally: $|\Xi_1|=2^{d-1}$, where we let 
$$
\Xi_1:=\left\{\|D_{a_1}\ldots D_{a_k} D_1\varphi\|_{\ell^2(\Bbb Z^d)}\,\big|\,\, a_i\neq a_j \,\forall \,i\neq j\, ;\,\{a_1,\ldots,a_k\}\subset\{2,\ldots,d\}\right\}.
$$
\noindent We note that we could also employ estimates by $\|D_i\varphi\|_{\ell^2(\Bbb Z^d)}$ for any $i\in\{1,\ldots,2^d\}$, but our inequality will not change due to our symmetrising argument.
Similarly, we have $2^{d-1}$ norms originating from the term $\left[ \varphi \right]^{1/2}_{1}$, whose estimates will not involve the operator $D_1$. Hence $|\Xi_2|=2^{d-1}$, where we let 
$$
\Xi_2:=\left\{\|D_{a_1}\ldots D_{a_k} \varphi\|_{\ell^2{(\Bbb Z^d)}}\,\big|\,\, a_i\neq a_j \,\forall \,i\neq j\, ;\,\{a_1,\ldots,a_k\}\subset\{2,\ldots,d\}\right\}.
$$

\noindent We will now apply Lemma \ref{DownDifferenceN} repeatedly, to reduce the order of the operator inside the norms to either 0 or 1. We recognise that we have to estimate all $^1\xi\in\Xi_1$ by $^1\xi_1:=\|D_1\varphi\|_{\ell^2(\Bbb Z^d)}$ or alternatively by $\|\varphi\|_{\ell^2(\Bbb Z^d)}$. \\
Hence, we choose a $p\in\{0,\ldots, 2^{d-1}\}$ 
to estimate $p$ elements in $\Xi_1$ by $\|D_1\varphi\|_{\ell^2(\Bbb Z^d)}$,  leaving $2^{d-1}-p$ elements in $\Xi_1$ to be estimated by $\|\varphi\|_{\ell^2(\Bbb Z^d)}$.
\noindent However, for all $2^{d-1}$ elements $^2\xi\in\Xi_2$, we have to provide an estimate by $^2\xi_1:=\|\varphi\|_{\ell^2(\Bbb Z^d)}$ only. This means we have $2^d-p$ elements in $\Xi:=\Xi_1\bigcup\Xi_2$ to be estimated by $\|\varphi\|_{\ell^2(\Bbb Z^d)}$: 
$$
\|\varphi\|_{\ell^\infty(\Bbb Z^d)} ^{2^d} \leq
\kappa_{p,d}\|D_{1}\varphi\|^{p}_{\ell^2(\Bbb Z^d)} \,  \|\varphi\|^{2^d-p}_{\ell^2(\Bbb Z^d)}.
$$
where $\kappa_{p,d}$ remains a constant of the form $2^z$ with $z\in\Bbb Q$, which we leave to be identified in the next section. We thus obtain the following estimate:
$$
\|\varphi\|^{2^{d+1}/p}_{\ell^\infty(\Bbb Z^d)}  \leq
\kappa_{p,d}^{2/p}\|D_{1}\varphi\|^{2}_{\ell^2(\Bbb Z^d)} \,  \|\varphi\|^{(2^{d+1}-2p)/p}_{\ell^2(\Bbb Z^d)}.
$$
We now exploit the symmetry of the argument:
$$
d\,\|\varphi\|^{2^{d+1}/p}_{\ell^\infty(\Bbb Z^d)} 
\leq
\kappa_{p,d}^{2/p}\left(\|D_{1}\varphi\|^{2}_{\ell^2(\Bbb Z^d)}+\ldots+\|D_{d}\varphi\|^{2}_{\ell^2(\Bbb Z^d)} \right)\,  \|\varphi\|^{(2^{d+1}-2p)/p}_{\ell^2(\Bbb Z^d)}
$$
$$
=
\kappa_{p,d}^{2/p}\|\nabla_D\varphi\|^{2}_{\ell^2(\Bbb Z^d)}\,  \|\varphi\|^{(2^{d+1}-2p)/p}_{\ell^2(\Bbb Z^d)} ,
$$
and finally rearrange:
$$
\|\varphi\|_{\ell^\infty(\Bbb Z^d)} 
\leq
\left(\frac{\kappa_{p,d}}{{d^{p/2}}}\right)^{1/2^d}\|\nabla_D\varphi\|^{p/2^d}_{\ell^2(\Bbb Z^d)}\,  \|\varphi\|^{1-p/2^d}_{\ell^2(\Bbb Z^d)}.
$$
\end{proof}
\newpage 

\section{The Constant $\kappa_{p,d}$}

\noindent It remains to identify the constant $\kappa_{p,d}$, we thus give:

\begin{theorem}\label{Agmond-constant}

We have, for arbitrary dimension $d$ and $p\in\{1,\ldots,2^{d-1}\}$:
$$
\kappa_{p,d}=
2^{\,d\,\cdot\,2^{d-1}-p}.
$$

\end{theorem}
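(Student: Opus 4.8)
The plan is to track the powers of $2$ that accumulate when the $2^d$ norms produced in the proof of Theorem \ref{AgmonArbDim} are each reduced, by repeated use of Lemma \ref{DownDifferenceN}, to either $\|D_1\varphi\|_{\ell^2(\Bbb Z^d)}$ or $\|\varphi\|_{\ell^2(\Bbb Z^d)}$. First I would re-read the chain of Agmon--Cauchy steps in Theorem \ref{AgmonArbDim} as producing exactly one norm $\|D_S\varphi\|_{\ell^2(\Bbb Z^d)}$ for every subset $S\subseteq\{1,\ldots,d\}$, where $D_S:=\prod_{i\in S}D_i$: at the $j$-th application of Lemma \ref{Agmon-CauchyArbDim} each existing factor splits into two, one carrying the extra operator $D_j$ and one not, so after $d$ steps the $2^d$ factors are indexed bijectively by the $2^d$ subsets of $\{1,\ldots,d\}$. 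This identifies $\Xi_1$ with the subsets containing $1$ and $\Xi_2$ with those not containing $1$, each of size $2^{d-1}$, and gives the starting point
$$
\|\varphi\|_{\ell^\infty(\Bbb Z^d)}^{2^d}\;\le\;\prod_{S\subseteq\{1,\ldots,d\}}\|D_S\varphi\|_{\ell^2(\Bbb Z^d)}.
$$

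Next I would quantify the cost of each reduction using the remark following Lemma \ref{DownDifferenceN}: eliminating $l$ difference operators introduces a factor $2^l$. Hence reducing $\|D_S\varphi\|_{\ell^2(\Bbb Z^d)}$ to $\|\varphi\|_{\ell^2(\Bbb Z^d)}$ costs the factor $2^{|S|}$, while reducing it to $\|D_1\varphi\|_{\ell^2(\Bbb Z^d)}$ — which is only possible when $1\in S$, i.e. $S\in\Xi_1$, since the lemma can only delete operators — costs the factor $2^{|S|-1}$. Following the scheme of Theorem \ref{AgmonArbDim}, I choose a set $A\subseteq\Xi_1$ of $p$ subsets to be reduced to $\|D_1\varphi\|_{\ell^2(\Bbb Z^d)}$ and reduce every remaining subset to $\|\varphi\|_{\ell^2(\Bbb Z^d)}$. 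Collecting the factors, the constant is $\kappa_{p,d}=2^{z}$ with
$$
z=\sum_{S\in A}(|S|-1)+\sum_{S\in\Xi_1\setminus A}|S|+\sum_{S\in\Xi_2}|S|=\sum_{S\subseteq\{1,\ldots,d\}}|S|-|A|.
$$

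It then remains only to evaluate the two terms. Since $|A|=p$ and, by double counting, each coordinate $i\in\{1,\ldots,d\}$ lies in exactly $2^{d-1}$ subsets of $\{1,\ldots,d\}$, we get $\sum_{S}|S|=d\cdot 2^{d-1}$, whence $z=d\cdot 2^{d-1}-p$ and $\kappa_{p,d}=2^{\,d\cdot 2^{d-1}-p}$, as claimed. The step I expect to carry the real content is the bookkeeping collapse in the displayed formula for $z$: the point is that retaining the operator $D_1$ on a chosen subset $S\in A$ saves exactly one power of $2$ regardless of $|S|$, so the total exponent depends only on $p$ and not on which $p$ members of $\Xi_1$ are kept. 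This is precisely what makes $\kappa_{p,d}$ a single well-defined constant, and it is the only place where one must argue rather than merely compute; the remaining arithmetic is routine.
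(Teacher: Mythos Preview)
Your proof is correct and reaches the same conclusion as the paper, but by a cleaner route. The paper first establishes a separate lemma (Lemma \ref{PascalOmega}) showing by induction that the number of terms of operator-order $i$ in $\Xi_1$, respectively $\Xi_2$, is $\binom{d-1}{i-1}$, respectively $\binom{d-1}{i}$; it then computes the minimal constant $\kappa_{2^{d-1},d}=\prod_{i=0}^{d-1}2^{2i\binom{d-1}{i}}$, observes that lowering $p$ by one contributes an extra factor $2$, and finally simplifies the exponent via the differentiated binomial identity $\sum_{k}k\binom{n}{k}=n\,2^{n-1}$. Your parametrisation of the $2^d$ factors by subsets $S\subseteq\{1,\dots,d\}$ bypasses all of this: once one sees that retaining $D_1$ on a chosen $S\in\Xi_1$ saves exactly one power of $2$ independently of $|S|$, the whole exponent collapses to $\sum_S|S|-p$, and the double-counting $\sum_S|S|=d\cdot 2^{d-1}$ finishes in one line. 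What the paper's approach buys is an explicit description of how many terms sit at each order (potentially useful elsewhere); what yours buys is a shorter, structurally transparent argument that also makes immediate the paper's remark that $\kappa_{p,d}$ does not depend on \emph{which} $p$ members of $\Xi_1$ are kept.
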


\noindent We will break the proof down into several steps. 
The method for finding $\kappa_{p,d}$ will rely largely on the following observation:

Let $\tau (\xi)$ be the order of the operator contained in any given $\xi\in\Xi$. Then we let $\Omega_i:=\{\xi\,\big|\,\tau (\xi)=i\}$, be the set of all terms in the estimate whose operator has a given order $i$. In $\Xi_1$  we have $1\leq i \leq d$, and in $\Xi_2$, $0\leq i \leq d-1$.

\begin{lemma}\label{PascalOmega}
\noindent For the size of $\Omega_i$, we have for $d\geq 2$:

\noindent For $\Xi_1$: 
$$
|\Omega_i|=\binom{d-1}{i-1}, \qquad 1\leq i \leq d,
$$
and $\Xi_2$:
$$
|\Omega_i|=\binom{d-1}{i}, \qquad 0\leq i \leq d-1.
$$
\end{lemma}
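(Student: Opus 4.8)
The plan is to prove both identities by a direct counting argument: I would recognise each member of $\Xi_1$ and of $\Xi_2$ as being indexed by a subset of $\{2,\ldots,d\}$, and then sort these subsets by cardinality. The key point, established first, is that because the partial difference operators $D_1,\ldots,D_d$ commute, an element of $\Xi_1$ is completely determined by the \emph{set} $\{a_1,\ldots,a_k\}\subset\{2,\ldots,d\}$ of indices accompanying the compulsory operator $D_1$; likewise an element of $\Xi_2$ is determined by the set $\{a_1,\ldots,a_k\}\subset\{2,\ldots,d\}$ alone. This yields a bijection between $\Xi_1$ and the power set of $\{2,\ldots,d\}$, and similarly for $\Xi_2$, which in particular re-confirms $|\Xi_1|=|\Xi_2|=2^{d-1}$.

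Next I would translate the order $\tau(\xi)$ into the language of these subsets. For $\xi\in\Xi_1$ the operator is $D_{a_1}\cdots D_{a_k}D_1$, so $\tau(\xi)=k+1$, where $k$ is the size of the associated subset $S\subset\{2,\ldots,d\}$; for $\xi\in\Xi_2$ the operator is $D_{a_1}\cdots D_{a_k}$ and $\tau(\xi)=k=|S|$. Fixing the order therefore amounts to fixing the cardinality of $S$. Since $\{2,\ldots,d\}$ has exactly $d-1$ elements, the number of its subsets of prescribed size $m$ is $\binom{d-1}{m}$. Setting $m=i-1$ for $\Xi_1$ (so that $\tau=i$) gives $|\Omega_i|=\binom{d-1}{i-1}$, with $i$ ranging over $1\le i\le d$ as $k=m$ runs from $0$ to $d-1$; setting $m=i$ for $\Xi_2$ gives $|\Omega_i|=\binom{d-1}{i}$, with $0\le i\le d-1$. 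As a consistency check I would verify $\sum_{i=1}^{d}\binom{d-1}{i-1}=\sum_{i=0}^{d-1}\binom{d-1}{i}=2^{d-1}$, matching the two cardinalities above.

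The only genuine subtlety — the step I would treat most carefully — is justifying the bijection itself, namely that the recursive expansion produced in the proof of Theorem \ref{AgmonArbDim} generates each subset exactly once. This is where the name of the lemma is suggestive: each application of Lemma \ref{Agmon-CauchyArbDim} along a fresh coordinate splits every existing term into two children, one that adjoins a new operator (order $+1$) and one that does not (order unchanged), so the distribution of orders evolves by the rule underlying Pascal's triangle, $\binom{d-1}{i-1}=\binom{d-2}{i-2}+\binom{d-2}{i-1}$. I would make this precise either through the clean bijective statement above — distinct subsets give distinct, non-cancelling formal terms precisely because the $D_a$ commute and a given index either appears or does not — or, equivalently, by an induction on the number of coordinates used in the expansion, tracking how $|\Omega_i|$ is inherited from the previous step through this doubling, with the base case checked by hand. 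I expect the bijective route to be the shortest, with the inductive route serving as a transparent cross-check that explains the Pascal structure.
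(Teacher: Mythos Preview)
Your proposal is correct, and your primary route---the direct bijection between elements of $\Xi_1$ (respectively $\Xi_2$) and subsets of $\{2,\ldots,d\}$, followed by counting subsets of a fixed cardinality---is a genuinely different and more economical argument than the paper's. The paper proves the lemma by induction on $d$: it assumes the formula for $|\Omega_i|$ at level $l$, observes that one further application of Lemma~\ref{Agmon-CauchyArbDim} splits every term of order $\tau$ into one of order $\tau$ and one of order $\tau+1$, and then invokes Pascal's identity $\binom{l-1}{i-1}+\binom{l-1}{i}=\binom{l}{i}$ to close the step. What your bijective argument buys is immediacy: since $\Xi_1$ and $\Xi_2$ are already \emph{defined} in the paper as indexed by subsets $\{a_1,\ldots,a_k\}\subset\{2,\ldots,d\}$, the count $|\Omega_i|=\binom{d-1}{i-1}$ (resp.\ $\binom{d-1}{i}$) drops out in one line, and your worry about ``each subset appearing exactly once in the recursive expansion'' is really a point about Theorem~\ref{AgmonArbDim} rather than about this lemma. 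What the paper's inductive proof buys is a transparent explanation of the Pascal structure underlying the expansion, which is exactly the alternative you sketch at the end; so you have in effect anticipated both arguments.
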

\begin{proof}
We follow by induction and prove the case of $\Xi_2$, noting that the argument for $\Xi_1$ is symmetrically identical. We have already seen that the formula is correct for $d=2$ , and now we assume it is true for $d=l$, i.e. for $0\leq i \leq l-1$:
$$
|\Omega_i|=\binom{l-1}{i},
$$
and thus we have the following list:
\bigskip
\begin{center}
\begin{tabular}{rccccccccccccc}
$\Xi_2$& $^2\xi_{2^{d-1}}$ & $\ldots$ & $\ldots$ &$^2\xi_2$& $^2\xi_1$& $|\Omega_0|$ & $|\Omega_1|$ & $|\Omega_2|$& \ldots & $|\Omega_{l-1}|$\\
$\Bbb Z^l$: & $D_l\ldots D_2$ & $\ldots$ & $\ldots$ & $D_2$ & $1$ & $\binom{l-1}{0}$ & $\binom{l-1}{1}$ & $\binom{l-1}{2}$ & \ldots & $\binom{l-1}{l-1}$ &  \\ 
\noalign{\smallskip\smallskip}
\end{tabular}
\end{center}

\bigskip

\noindent Now each term of a given order $\tau$ will, by the Agmon--Cauchy inequality (Lemma \ref{Agmon-CauchyArbDim}), generate a term of order $\tau$ and one of order $\tau+1$. Thus we have:
\medskip
\begin{center}
\begin{tabular}{rccccccccccccc}
$\Xi_2$&$^2\xi_{2^{d}}$&$\ldots$ & $\ldots$ & $^2\xi_2$ & $^2\xi_1$& $|\Omega_0|$ & $|\Omega_1|$ & $|\Omega_2|$& \ldots & $|\Omega_{l}|$\\
$\Bbb Z^{l+1}$: & $D_{l+1}\ldots D_2$ & $\ldots$ & $\ldots$ & $D_2$ & $1$ & $\binom{l-1}{0}$ & $\binom{l-1}{0}+\binom{l-1}{1}$ & $\binom{l-1}{1}+\binom{l-1}{2}$ & \ldots & $\binom{l-1}{l-1}$ &  \\ 
\noalign{\smallskip\smallskip}
\end{tabular}
\end{center}

\bigskip

\noindent Now we apply the standard combinatorial identity $^aC_b+ {^aC_{b+1}}= {^{a+1}C_{b+1}}$ and consider ${^aC_{0}}={^aC_{a}}=1$, which immediately implies:

\bigskip
\begin{center}
\begin{tabular}{rccccccccccccc}
$\Xi_2$ & $^2\xi_{2^{d}}$ & $\ldots$ & $\ldots$ & $^2\xi_{2}$ & $^2\xi_{1}$ & $|\Omega_0|$ & $|\Omega_1|$ & $|\Omega_2|$& ...& $|\Omega_{l+1}|$\\
$\Bbb Z^{l+1}$: & $D_{l+1}\ldots D_2$ & $\ldots$ & $\ldots$ & $D_2$ & $1$ & $\binom{l}{0}$ & $\binom{l}{1}$ & $\binom{l}{2}$ & \ldots & $\binom{l}{l}$ &  \\ 
\noalign{\smallskip\smallskip}
\end{tabular}
\end{center}

\bigskip

\noindent and hence for $d=l+1$, we have:
$$
|\Omega_i|=\binom{l}{i},
$$
completing our inductive step. 

\end{proof}

\noindent As discussed previously, if we consider to estimate a given $\xi\in\Xi$ using Lemma \ref{DownDifferenceN}, we will, for example, obtain 
$
\|D_1\ldots D_k \varphi\|_{\ell^2{(\Bbb Z^d)}}\le 2\|D_1\ldots D_{l-1}D_{l+1}\ldots D_k \varphi\|_{\ell^2{(\Bbb Z^d)}}.
$ We can see that we generate a factor of 2 for every partial difference operator we eliminate, and thus have, for $^1\xi\in\Xi_1$ and $^2\xi\in\Xi_2$ with order $\tau(^1\xi)$ and $\tau(^2\xi)$ respectively:
$$
^1\xi\leq\,\, 2^{\tau(^1\xi)-1}\,\|D_1\varphi\|_{\ell^2(\Bbb Z^d)},\qquad \text{and} \qquad ^2\xi\leq\,\, 2^{\tau(^2\xi)}\,\|\varphi\|_{\ell^2(\Bbb Z^d)}.
$$

\noindent We note here that $\kappa_{p,d}$ will not depend on which $\ell^2(\Bbb Z^d)$-norms in $\Xi_1$ are chosen to be estimated by $^2\xi_1:=\,\|\varphi\|_{\ell^2(\Bbb Z^d)}$. The reason for this is transparent when considering that the sum of all the orders $\sum_{i=1}^{2^{d-1}} \tau({^1\xi_i})$ is a constant and needs to be reduced to the constant $p\cdot \tau({^1\xi_1})=p$, generating a unique $\kappa_{p,d}$.

\begin{lemma}
The $\min_p\kappa_{p,d}$ will be attained at $p=2^{d-1}$ and takes on the following explicit form:
%
$$
\kappa_{2^{d-1},d}=\mathlarger{\prod_{i=0}^{d-1}}\,\,
2^{2i\binom{d-1}{i}}.
$$

\end{lemma}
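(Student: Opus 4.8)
The plan is to read the constant straight off the factorisation recorded just before this lemma, use that factorisation to locate the minimiser, and then specialise to $p=2^{d-1}$ and simplify the resulting product. Recall that each $^1\xi\in\Xi_1$ of order $\tau$ contributes $2^{\tau-1}$ when estimated by $\|D_1\varphi\|_{\ell^2(\Bbb Z^d)}$ and $2^{\tau}$ when estimated by $\|\varphi\|_{\ell^2(\Bbb Z^d)}$, while every $^2\xi\in\Xi_2$ contributes $2^{\tau}$. Increasing $p$ by one amounts to reassigning one further $\Xi_1$ term from $\|\varphi\|$ to $\|D_1\varphi\|$, which replaces a factor $2^{\tau}$ by $2^{\tau-1}$; hence $\kappa_{p+1,d}=\tfrac12\,\kappa_{p,d}$. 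This confirms the remark that $\kappa_{p,d}$ depends on $p$ alone, and since the constant is strictly decreasing in $p$, its minimum over $p\in\{1,\ldots,2^{d-1}\}$ is attained at the largest admissible value $p=2^{d-1}$, giving the first assertion.

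For the explicit form, I would observe that $p=2^{d-1}$ is exactly the case in which \emph{every} term of $\Xi_1$ is estimated by $\|D_1\varphi\|$ and every term of $\Xi_2$ by $\|\varphi\|$. Collecting the factors order by order, with the multiplicities $|\Omega_i|$ supplied by Lemma \ref{PascalOmega}, yields
$$
\kappa_{2^{d-1},d}=\prod_{i=1}^{d}\bigl(2^{\,i-1}\bigr)^{\binom{d-1}{i-1}}\cdot\prod_{i=0}^{d-1}\bigl(2^{\,i}\bigr)^{\binom{d-1}{i}}.
$$
Reindexing the first product by $j:=i-1$ makes both products run over $i=0,\ldots,d-1$ with the identical exponent $i\binom{d-1}{i}$, so they combine into $\prod_{i=0}^{d-1}\bigl(2^{\,i}\bigr)^{2\binom{d-1}{i}}=\prod_{i=0}^{d-1}2^{\,2i\binom{d-1}{i}}$, which is the claimed expression. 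As a sanity check, the identity $\sum_{i=0}^{n}i\binom{n}{i}=n\,2^{\,n-1}$ with $n=d-1$ collapses this product to $2^{(d-1)2^{d-1}}$.

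The work here is essentially bookkeeping rather than analysis, so the main thing to get right is the $-1$ correction attached to the $\Xi_1$ terms: it must be tracked separately from the plain $2^{\tau}$ factors, and one has to verify through the sum-of-orders argument preceding the lemma that the total exponent — and hence $\kappa_{p,d}$ — is insensitive to \emph{which} $p$ terms of $\Xi_1$ are singled out, only to how many. Once the halving relation $\kappa_{p+1,d}=\tfrac12\kappa_{p,d}$ and the order-by-order factor count are in place, both the minimisation and the product identity follow from the two elementary binomial sums $\sum_i\binom{n}{i}=2^{n}$ and $\sum_i i\binom{n}{i}=n2^{n-1}$.
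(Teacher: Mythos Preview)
Your proposal is correct and follows essentially the same route as the paper: group the $\Xi_1$ and $\Xi_2$ contributions by order using Lemma~\ref{PascalOmega}, obtain the two products $\prod_{i=1}^{d}2^{(i-1)\binom{d-1}{i-1}}$ and $\prod_{i=0}^{d-1}2^{i\binom{d-1}{i}}$, observe they coincide after reindexing, and multiply. Your halving relation $\kappa_{p+1,d}=\tfrac12\kappa_{p,d}$ is a slightly more explicit justification of the minimisation than the paper gives, but the underlying argument is the same.
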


\begin{proof}
Our minimum constant for $\Xi_1$ in fact occurs if we choose all $^1\xi_1\in\Xi_1$ to be estimated by $\|D_1\varphi\|_{\ell^2{(\Bbb Z^d)}}$, i.e. choose $p=2^{d-1}$, the maximum $p$ possible.
Our minimum constant, denoted by $\rho^1_{d}$, for all terms in $\Xi_1$ will thus be:
$$
\rho^1_{d}=\mathlarger{\prod_{k=1}^{2^{d-1}}}\,\,
2^{\tau(^1\xi_k)-1}.
$$

\noindent Instead of examining each individual element $^1\xi$, we consider that all $^1\xi$ of equal order $i$ generate the same constant, namely $2^{i-1}$. Thus we collect all $^1\xi$ of the same order, and obtain: 
$$
\rho^1_{d}=\mathlarger{\prod_{i=1}^{d}}\,\,
2^{(i-1)|\Omega_i|} 
=\mathlarger{\prod_{i=1}^{d}}\,\,
2^{(i-1)\binom{d-1}{i-1}}.
$$

\noindent Then we need to estimate all $^2\xi\in\Xi_2$, and
we proceed as for $\Xi_1$. All $^2\xi$ need to be estimated by $\|\varphi\|_{\ell^2(\Bbb Z^d)}$, each generating the constant $2^i$, forming the equivalent pattern as that of $\Xi_1$. We thus obtain, for the minimal constant $\rho^2_{d}$:
$$
\rho^2_{d}=\mathlarger{\prod_{i=0}^{d-1}}\,\,
2^{i|\Omega_{i}|}
=\mathlarger{\prod_{i=0}^{d-1}}\,\,
2^{i\binom{d-1}{i}}.
$$
\vspace{0.2cm}
\noindent We now see that $\rho^2_{d}=\rho^1_{d}$, and:
$$
\kappa_{2^{d-1},d}=\rho^2_{d}\rho^1_{d}=\mathlarger{\prod_{i=0}^{d-1}}\,\,
2^{2i\binom{d-1}{i}}.
$$

\end{proof}

\noindent We are now finally in a position to prove Theorem \ref{Agmond-constant}:

\begin{proof} [Proof of Theorem \ref{Agmond-constant}]

\noindent We are left to analyse the constant's dependence on our choice of $p$. First we note that in addition to the constant generated above, we will have chosen $2^{d-1}-p$ terms to be further reduced to $\|\varphi\|_{\ell^2(\Bbb Z^d)}$, each generating a power of $2$.
Hence we additionally need to multiply $\kappa_{2^{d-1},d}$ by $2^{2^{d-1}-p}$. Thus our final constant will be:

$$
\kappa_{p,d}= 2^{2^{d-1}-p}\cdot\mathlarger{\prod_{i=0}^{d-1}}\,\,
2^{2i\binom{d-1}{i}}
=
2^{2^{d-1}-p+2\sum_{i=0}^{d-1}i\binom{d-1}{i}},
$$

\noindent Then we can simplify this further by considering the binomial formula $(1+X)^n=\sum_{k=0}^n \binom{n}{k}X^k$.
We differentiate with respect to $X$ and set $X=1$:
$$
n\cdot\,2^{n-1}=\sum_{k=0}^n k\,\binom{n}{k}.
$$
Thus we arrive at:
$$
\kappa_{p,d}=2^{d\cdot\,2^{d-1}-p}.
$$

\end{proof}

\bibliography{bibliography}{}
\bibliographystyle{alpha}

\end{document}